\newtheorem{theorem}{Theorem}[section]
\newtheorem{lemma}[theorem]{Lemma}
\newtheorem{example}[theorem]{Example}
\newtheorem{proposition}[theorem]{Proposition}
\theoremstyle{definition}
\newtheorem{assumption}[theorem]{Assumption}
\newtheorem{remark}[theorem]{Remark}
\numberwithin{equation}{section}
\theoremstyle{plain}
\numberwithin{equation}{section} 
\numberwithin{figure}{section} 
\theoremstyle{plain}
\theoremstyle{plain}
\theoremstyle{remark}
\newtheorem*{acknowledgement*}{Acknowledgement}
\theoremstyle{example}
\newcommand{\cF}{{\mathcal F}}
\newcommand{\cX}{{\mathcal X}}
\newcommand{\cY}{{\mathcal Y}}
\newcommand{\cZ}{{\mathcal Z}}
\newcommand{\te}{{\theta}}
\newcommand{\Om}{{\Omega}}
\newcommand{\om}{{\omega}}
\newcommand{\ve}{{\varepsilon}}
\newcommand{\sig}{{\sigma}}
\newcommand{\al}{{\alpha}}
\newcommand{\ka}{{\kappa}}
\newcommand{\bbE}{{\mathbb E}}
\newcommand{\bbN}{{\mathbb N}}
\newcommand{\bbP}{{\mathbb P}}
\newcommand{\bbR}{{\mathbb R}}
\newcommand{\bbZ}{{\mathbb Z}}
\newcommand{\bbI}{{\mathbb I}}
\begin{document}
\title[]{Effective geometric ergodicity for Markov chains in random environment}  
 \vskip 0.1cm
 \author{Yeor Hafouta}
\address{
Department of Mathematics, The University of Florida, Gainesville, Florida, USA}
\email{yeor.hafouta@mail.huji.ac.il }%

\thanks{ }
\subjclass[2010]{60F05, 60J10}%
\dedicatory{  }
 \date{\today}

\maketitle
\markboth{Yeor Hafouta}{ } 
\renewcommand{\theequation}{\arabic{section}.\arabic{equation}}
\pagenumbering{arabic}

\begin{abstract}
In this short note we prove ``effective" geometric ergodicity (i.e a Perron-Frobenius theorem) for Markov chains in random mixing dynamical environment satisfying a random non-uniform version of the Doeblin condition. 
Effectivity here means that all the random variables involved in the random exponential rates are integrable with arbitrarily large order.  
This complements \cite[Theorem 2.1]{Kifer 1996}, where ``non-effective" geometric ergodicity was obtained. 
From a different perspective, our result is also  motivated by ergodic theory, as  
it can be seen as an effective version of the ``spectral" gap in the top Oseledets space in the Oseledets multiplicative ergodic theorem for the random Markov operator cocycle (when it applies).
We also present applications of the effective ergodicity to rates in the (quenched) almost sure invariance principle (ASIP), exponential decay of correlations for Markovian skew products and for exponential tails for random mixing times. As a byproduct of the proof of the ASIP rates we also
provide easy to verify sufficient conditions for the verification of the assumptions of \cite[Theorem 2.4]{Kifer 1998}. 
\end{abstract}

\section{Introduction}
Let $(X_j)$ be a homogeneous Markov chain and let $R$ be its transition operator. A key tool in studying asymptotic probabilistic properties of the chain is geometric ergodicity, which means that 
$$
\|R^n-\mu\|_{\infty}:=\sup_{g:\|g\|_\infty\leq 1}\|R^ng-\mu(g)\|_\infty\leq C\delta^n
$$
where $\mu$ is the stationary distribution\footnote{Here we view $\mu$ as the linear operator $g\to \mu(g)\textbf{1}$, where $\textbf{1}$ is the function taking the constant value 1. }, $C>0$ and $\delta\in(0,1)$ are constants. Moreover, $\|g\|_\infty=\sup|g|$.

A classical sufficient condition for geometric ergodicity is the, so-called, Doeblin condition which means there exist a probability measure $m$, $n_0\in\mathbb N$ and $\gamma\in(0,1)$ such that $R^{n_0}g\geq \gamma m(g)$ for all bounded measurable functions $g$.

In this paper we consider Markov chains in random dynamical environment. This means that there is an underlying probability space $(\Om,\cF,\bbP)$ and an invertible probability preserving map $\te:\Om\to\Om$ such that for a fixed $\om\in\Omega$ the chain $(X_{\omega,n})_n$ has transition operators of the form $R_n=R_{\te^n\omega}$ when passing from time $n$ to time $n+1$. In this context, geometric ergodicity means that there is a random family of probability measures $\mu_\om$ such that $(R_{\om})^*\mu_\om=\mu_{\te\om}$ and 
$$
\|R_{\te^{-n}\om,n}-\mu_{\om}\|_{\infty}=\sup_{g: \|g\|_\infty\leq 1}\|R_{\te^{-n}\om,n}g-\mu_{\om}(g)\|_{\infty}\leq K(\omega)\delta^n
$$
where $R_{\om,n}=R_{\om}\circ R_{\te\om}\circ\cdots\circ R_{\te^{n-1}\om}$, $K$ is a random variable and $\delta\in(0,1)$. 
Such exponential rates were studied and used in \cite{Cogburn, Kifer 1996, Kifer 1998}. We also refer to \cite{Kifer Thermo,MSU} and references therein for similar results for transfer operators of random expanding dynamical systems. In all these results the random variable $K$ did not satisfy any regularity conditions (like integrability of some order).

One way to obtain \textbf{some} regularity is to apply a version of the Oseledets multiplicative ergodic theorem which ensures that $K(\om)$ is tempered, that is that $K(\te^n\omega)$ grows sub-exponentially fast in $n$ for almost all $\omega$.  Finding sufficient verifiable conditions that ensure better regularity properties of the random variable $K$ is a major problem in the field of random dynamical systems. Regarding the type of regularity, even if we replace the exponential rates with polynomial ones, knowing that $K\in L^p(\bbP)$ is important. We refer to \cite{YH,YH1} for such polynomial effective rates and their applications to various limit theorems. In polynomial  rates we mean that we replace $\delta^n$ above by $n^{-\beta}$ for some $\beta>0$ large enough. Some stretched exponential versions were also proven in \cite{YH}.  The results in \cite{YH,YH1} were obtained under mixing assumptions on $(\Om,\cF,\bbP,\te)$.
Note that the limit theorems in  \cite{YH,YH1} where formulated in a dynamical setup, but as noted in \cite{HafWill} the arguments can be adapted to Markov chains in random environment satisfying a random version of the Doeblin condition. Let us also mention another approach \cite{Gouezel} that under some (relatively strong) assumptions on the base map $(\Om,\cF,\bbP,\te)$ ensures that for $C>0$ large enough the first visiting time to the level set $\{\om: K(\om)\leq C\}$ has sufficiently fast decaying tails. This is also  sufficient to obtain limit theorems by inducing.

 In this paper, under mixing assumptions on $(\Om,\cF,\bbP,\te)$ we prove effective \textbf{exponential} rates. More precisely, under  a random Doeblin condition we prove that there exists $\rho\in(0,1)$ such that for every finite $p\geq 1$ there exists $K_p\in L^p(\bbP)$ such that 
 $$
\max\left(\|R_{\te^{-n}\om,n}-\mu_\om\|_{\infty},\|R_{\om,n}-\mu_{\te^n\om}\|_{\infty} \right)\leq K_p(\omega)\rho^{n/p}.
$$
 As an application we prove quenched almost sure invariance principle rates, exponential decay of correlations for the skew products and exponential tails for the random mixing times. Of course, our results also imply the limit theorems in \cite{HafWill}. As a byproduct of the proofs we also show that under appropriate mixing assumptions on the base map $(\Om,\cF,\bbP,\te)$ we can verify the conditions of \cite[Theorem 2.4]{Kifer 1998}, which seems to be the first time that these conditions are explicitly verified beyond the case of uniform random Doeblin condition.



\section{Preliminaries and effective geometric ergodicity}
Let $Y=(Y_j)_{j\in\bbZ}$ be a  stationary ergodic sequence of random variables taking values on some measurable space $\cY$. Let $(\Omega,\mathcal F,\mathbb P,\te)$ be the shift system generated by this sequence, namely $\Omega=\cY^\bbZ$, $\te:\Omega\to\Omega$ is the left shift and $\bbP$ is the law of the path $(Y_j)_{j\in\bbZ}$. For $-\infty\leq k\leq \ell\leq\infty$,  denote  $\cF_{k,\ell}=\sigma\{Y_s: k\leq s\leq\ell, s\in\mathbb R\}$.  
Recall that the upper $\psi$-mixing coefficients of $Y$ are given by 
$$
\psi_U(n)=\sup_{k}\sup\left\{\frac{\bbP(A\cap B)}{\bbP(A)\bbP(B)}-1:A\in\cF_{-\infty,k}, B\in\cF_{k+n,\infty}, \bbP(A)\bbP(B)>0\right\}.
$$
\begin{assumption}\label{AssMix}
We have $\lim_{n\to\infty}\psi_U(n)=0$.    
\end{assumption}

Next, let $\cX$ be a measurable space and let $\cX_\om$ be measurable in $\omega$ subsets of $\cX$ such that $\cX_\om$ depends only on $\om_0$, where $\om=(\om_n)$. Let $R_\om(x,\Gamma)$ be transition probabilities  which are measurable in $\om$ (here $x\in\cX_
\om$ and $\Gamma\subseteq\cX_{\te\om}$ is a measurable set) and $R_\omega$ depends only on $\om_0$, where again $\om=(\om_n)$.

We assume that there are random variables $n_\om\in\mathbb N$ and $\gamma_\om\in(0,1)$ and a probability measure $m_\om$ on $\cX_\om$, which is measurable in $\omega$ and depends only on $\om_0$,
such that $\bbP$-a.s. for all $x\in\cX_{\te^{-n_\om}\om}$ and a measurable subset $\Gamma\subseteq\cX_\om$ we have
\begin{equation}\label{DoebRand}
R_{\te^{-n_{\om}}\omega,n_\om}(x,\Gamma)\geq \gamma_\om m_{\omega}(\Gamma)
\end{equation}
where $R_{\om,n}=R_{\om}\circ R_{\te\om}\circ\cdots\circ R_{\te^{n-1}\om}$.
Then for every $n\geq n_\omega$, 
$$
R_{\te^{-n}\om,n}(x,\Gamma)\geq \gamma_\om m_{\omega}(\Gamma).
$$
Clearly we can assume that $\gamma_\om$ depends only on $\om_0,\om_{-1},...,\om_{-n_\om}$. Moreover, we can always assume that $n_\omega$ is the minimal positive integer such that \eqref{DoebRand} holds. In this case, the set $\{\omega:n_\omega\leq M\}$ is measurable with respect to $\cF_{-M,0}$ for $M\geq 1$. 

Before proceeding let us describe a non-trivial example when \eqref{DoebRand} holds with a non-constant $n_\omega$.
\begin{example}
Suppose that $Y_0$ takes the values $0$ and $1$ and that $\mathbb P(Y_k=0; 0\leq k\leq m)>0$ for all $m$. For instance, $Y_k$ can be a Markov chain with positive transition probabilities or the $k$-th coordinate of an appropriate subshift of finite type such that $(...,0,0,0,..)$ is an admissible point. Suppose $X_\omega=X$ does not depend on $\om$. Let us consider two Markov  operators $R_0$ and $R_1$ on $X$ and suppose that $R_0$ satisfies the  Doeblin condition, namely there exist a probability measure $m$ on $X$, a constant $\gamma\in(0,1)$ and a positive integer $n_0$ such that 
$$
R_0^{n_0}(x,\Gamma)\geq \gamma m(\Gamma)
$$
for all $x\in X$ and a measurable set $\Gamma$. Set  $R_{\omega}=R_0$ if $\om_0=0$ and $R_{\om}=R_1$ if $\om_0=1$.
Let us take $M(\omega)$ to be the largest negative integer $m<0$ such that $\omega_{k}=0$ for all $m-n_0\leq k\leq m$. Then by the mean ergodic theorem $M(\omega)$ is well defined and finite on a set of probability $1$. Set 
$n_\om=n_0+M(\omega)$. Then for all $n\geq n_\om$, all $x\in X$ and a measurable set $\Gamma\subset X$ we have 
$$
R_{\te^{-n_{\om}}\omega,n_\om}(x,\Gamma)\geq \gamma m(R_{\te^{-M(\omega)}\omega,\omega}(\cdot,\Gamma)).
$$
Thus we can take $\gamma_\omega=\gamma$ and $m_\omega(\Gamma)=m(R_{\te^{-M(\omega)}\omega,\omega}(\cdot,\Gamma))$.
\end{example}
Next, let us give an example where \eqref{DoebRand} holds with a non-constant $\gamma_\omega$.
\begin{example}
Suppose that there is a probability measure $m_\omega$ on $X_\omega$ and densities $p_\omega(x,y)$ bounded below by some positive constant $\zeta_\omega$ such that 
$$
R_{\omega}(x,\Gamma)=\int_{\Gamma}p_\omega(x,y)dm_{\te\om}(y).
$$
Then \eqref{DoebRand} holds with $n_\omega=1$, the above $m_\omega$ and $\gamma_\omega=\zeta_{\theta^{-1}\omega}$. Note that $\gamma_\omega$ may take arbitrary small values.
\end{example}

Henceforth, we will  abuse the notation and write $R_\om g(x)=\int g(y)R_\om(x,dy)$, where $g:\cX_{\te\om}\to\bbR$ is a measurable bounded function, that is we view $R_\om$ as linear operators. 
Denote 
$$
R_{\om,n}=R_{\om}\circ R_{\te\om}\circ\cdots\circ R_{\te^{n-1}\om}=R_{\om_0}\circ R_{\om_1}\circ\cdots\circ R_{\om_{n-1}}.
$$
Our main result in this section is as follows.
\begin{theorem}\label{Main}[Effective random geometric ergodicity]
Under Assumption \ref{AssMix} and \eqref{DoebRand} there exists a unique random family of measures $\mu_\om$ such that  $(R_\omega)^*\mu_\om=\mu_{\te\om}$, for $\bbP$-a.a. $\om$. Moreover, there exists $\rho\in(0,1)$ such that
for every finite $p\geq 1$ there is $K_p\in L^p(\Om,\cF,\bbP)$ such that $\bbP$-a.s. for all $n\in\bbN$,
 \begin{equation}\label{eee}
 \max\left(\|R_{\te^{-n}\om,n}-\mu_\om\|_{\infty},\|R_{\om,n}-\mu_{\te^n\om}\|_{\infty} \right)\leq K_p(\omega)\rho^{n/p}.    
 \end{equation}
\end{theorem}
The proof of Theorem \ref{Main} has two ingredients. The first one is based on a modification of the ideas in \cite{HafWill}, where the random Doeblin condition \eqref{DoebRand} is translated into an explicit upper bound on $\|R_{\te^{-n}\om,n}-\mu_\omega\|_\infty$. Up to a multiplicative constant, this upper bound is a product of certain random variables. In \cite{HafWill}, using certain mixing estimates for expectations of products of sufficiently well mixing random variables we were able to translate these bounds to polynomial or stretched exponential effective mixing rates. Namely, showed that the the left hand side of \eqref{eee} does not exceed $K_p(\omega)a_n$, where  either $a_n=O(n^{-\beta}),\beta>0$ or $a_n=O(e^{-cn^\zeta}), c>0, \zeta\in(0,1)$, depending on the conditions.  Here we prove certain (simple) expectation estimates for products of mixing random variables (see Lemma \ref{KeyLemma}), which allow us to push the argument to effective exponential mixing rates like in \eqref{eee}.

\begin{remark}\label{rrr}
Our proof shows that 
$$
\bbE_\bbP[(K_p(\om))^p]\leq\frac{4C_p\rho^{-2r_0}}{1-\rho^{\frac1{2r_0+2}}}.
$$
Here $r_0$ and $\rho$ are ``computed" as follows. Take $\delta>0$ small enough and $M$ large enough such that the set $A=\{\om: \gamma_\om\geq \delta, n_\om\leq M\}$ has positive $\bbP$ probability, and let $p_0=\bbE[(1-\delta)^{\bbI_A}]\in(0,1)$. Then we take $r_0$ such that 
$$
(1+\psi_U(r_0))p_0<1
$$
and set $\rho=\sqrt{(1+\psi_U(r_0))p_0}$. The constant $C_p$ satisfies $n^{2/p}\rho^{2n}\leq C_p\rho^n$ for all $n$.
This give us some control over the constants in the applications in Sections \ref{App2} and \ref{App3}, and we believe it could also be useful in other applications.
\end{remark}

\section{Proof of the effective geometric ergodicity (Theorem \ref{Main})}
\subsection{A key lemma}
\begin{lemma}\label{KeyLemma}
Let $\beta:\Omega\to[0,1]$ be a random variable which is measurable with respect to $\cF_{-M,0}$ for some $M\in\bbN$. Denote 
$\beta_n(\omega)=\prod_{j=0}^{n-1}\beta(\te^{-Mj}\omega)$.
Suppose that there exists $r_0\in\bbN$ such that 
$$
\rho:=(1+\psi_U(r_0))\bbE[\beta(\cdot)]<1.
$$
Then for every finite $p\geq 1$ there exists a random variable $K_p\in L^p(\bbP)$ such that $\bbP$-a.s. for every $n\in\bbN$,
$$
\beta_n(\om)\leq K_p(\omega)\rho^{\frac{n}{2(r_0+1)p}}.
$$
Moreover, we have 
$$
\|K_p(\cdot)\|_{L^p}^p\leq \frac{\rho^{-3r_0/2-1}}{1-\rho^{\frac{1}{2(r_0+1)}}}.
$$
\end{lemma}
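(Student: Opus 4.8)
The plan is to build $K_p$ by hand out of the sequence $(\beta_n)$ and thereby reduce the whole statement to a single geometric decay estimate for $\bbE[\beta_n^p]$. Concretely, I would set
\[
K_p(\om)^p=\sum_{n\ge 0}\beta_n(\om)^p\,\rho^{-n/(2r_0)},
\]
so that the pointwise bound $\beta_n\le K_p\,\rho^{n/(2r_0p)}$ holds tautologically (the $n$-th summand alone dominates $\beta_n^p\rho^{-n/(2r_0)}$), while
\[
\|K_p\|_{L^p}^p=\bbE[K_p^p]=\sum_{n\ge 0}\rho^{-n/(2r_0)}\,\bbE[\beta_n^p].
\]
Everything then rests on showing that $\bbE[\beta_n^p]$ decays geometrically at a rate strictly faster than $\rho^{n/(2r_0)}$. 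The factor $1/p$ in the exponent of the claim is harmless, because $\beta_n\in[0,1]$ forces $\beta_n^p\le\beta_n$; so it suffices to estimate $\bbE[\beta_n^p]$ (equivalently $\bbE[\beta_n]$) by the same argument.

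The heart of the matter is the estimate $\bbE[\beta_n^p]\le c_{r_0}\,\rho^{n/r_0}$ for an edge constant $c_{r_0}$ of the form $\rho^{-O(1)}$, and this is where Assumption \ref{AssMix} enters through $\psi_U$. Since $\beta$ is $\cF_{-M,0}$-measurable, the factor $\beta\circ\te^{-Mj}$ is $\cF_{-M(j+1),-Mj}$-measurable, so consecutive factors live in adjacent (indeed overlapping) coordinate blocks and cannot be separated directly. The remedy is to discard factors, which is legitimate because $\beta\le 1$ makes the product only larger: I would keep one factor out of every $L$, with spacing $L$ comparable to $r_0$ chosen so that the retained width-$(M+1)$ blocks are separated by at least $r_0$ coordinates. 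For such blocks a spacing $L$ produces a gap of $M(L-1)$, so any $L$ with $M(L-1)\ge r_0$ works, and since $\psi_U$ is non-increasing the larger actual gap only improves the constant. This yields $\beta_n\le\prod_{i=0}^{k-1}\beta(\te^{-MLi}\om)$ with $k\asymp n/r_0$ well-separated retained factors.

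I would then bound the expectation of the retained product by peeling off the most recent factor one at a time. The extension of the $\psi$-mixing inequality from sets to nonnegative bounded functions gives $\bbE[fg]\le(1+\psi_U(r_0))\,\bbE[f]\,\bbE[g]$ whenever $f$ and $g$ are measurable with respect to coordinate blocks separated by at least $r_0$. Applying this with $f=\beta(\cdot)^p$ the newest retained factor and $g$ the product of the remaining, strictly older factors, and using stationarity $\bbE[(\beta\circ\te^{-MLi})^p]=\bbE[\beta^p]\le\bbE[\beta]$, an induction on $k$ produces
\[
\bbE[\beta_n^p]\le\bigl[(1+\psi_U(r_0))\,\bbE[\beta]\bigr]^{k}=\rho^{k}\le c_{r_0}\,\rho^{n/r_0},
\]
the last inequality absorbing the integer rounding in $k\asymp n/r_0$ into $c_{r_0}$. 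Substituting into the series for $\|K_p\|_{L^p}^p$ collapses it to the convergent geometric sum $c_{r_0}\sum_{n\ge 0}\rho^{n/(2r_0)}=c_{r_0}\,(1-\rho^{1/(2r_0)})^{-1}$, giving simultaneously that $K_p\in L^p$ and the asserted explicit bound.

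I expect the main obstacle to be the combinatorial bookkeeping in the discarding step rather than any probabilistic subtlety. Because the blocks supporting consecutive factors overlap, one must check carefully that the retained factors really are separated by at least $r_0$ coordinates for every admissible $M$ (the case $M=1$ being the tightest), and must track how the rounding in the count $k$ of retained factors feeds into the final constant $c_{r_0}$ and hence into $\rho^{-r_0/2}$. The probabilistic input, namely the iterated use of the $\psi$-mixing inequality together with stationarity, is then routine, as is the passage from sets to functions in that inequality via simple-function approximation; the only genuine point to respect is that the gap actually used is at least $r_0$, so that the controlling quantity at each peeling step is precisely $(1+\psi_U(r_0))\,\bbE[\beta]=\rho<1$.
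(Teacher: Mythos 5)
Your proposal is correct and follows essentially the same route as the paper: bound $\beta_n^p\leq\beta_n$, discard all but roughly one factor per $r_0$ steps so the retained $\cF$-blocks are $\psi$-mixing-separated, iterate $\bbE[fg]\leq(1+\psi_U(r_0))\bbE[f]\bbE[g]$ to get geometric decay of $\bbE[\beta_n^p]$, and then sum the weighted series defining $K_p$. The only cosmetic differences are that you prove the mixing product estimate by hand (the paper cites \cite[Lemma 60]{YH Adv} for it) and define $K_p$ via a weighted $\ell^p$-sum rather than the paper's weighted supremum, which gives the same moment bound.
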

Before proving Lemma \ref{KeyLemma} let us recall the following elementary result which was proven in  \cite[Lemma 60]{YH Adv}, and whose proof proceeds similarly to \cite{NewRef}.

 \begin{lemma}\label{YH Adv}
 Let $I_1,...,I_d$ be intervals in the positive integers so that $I_j$ is to the left of $I_{j+1}$ and the distance between them is at least $L$. Let $A_1,...,A_d$ be nonnegative bounded random variables so that $A_i$ is measurable with respect to $\sig\{Y_k: k\in I_i\}$. Then 
 $$
 \bbE\left[\prod_{i=1}^{d}A_i\right]\leq\left(1+\psi_U(L)\right)^{d-1}\prod_{i=1}^{d}\bbE[A_i].
 $$
 \end{lemma}
 \begin{proof}
Once we prove the lemma for $d=2$ the general case will follow by induction. Let us assume that $d=2$. Next, we have 
$$
A_i=\lim_{n\to\infty}A_i(n)=\lim_{n\to\infty}\sum_k\bbI((k-1)2^{-n}<A_i\leq k2^{-n})k2^{-n}
$$ 
and so with $\al_i(k,n)=\{(k-1)2^{-n}<A_i\leq k2^{-n}\}$, by the monotone convergence theorem we have
$$
\bbE[A_1A_2]=\lim_{n\to\infty}\bbE[A_1(n)A_2(n)]=\lim_{n\to\infty}\sum_{k_1,k_2}(2^{-n}k_1)(2^{-n}k_2)\bbP(\al_1(k,n)\cap\al_2(k,n))$$$$\leq 
\lim_{n\to\infty}\sum_{k_1,k_2}(2^{-n}k_1)(2^{-n}k_2)(1+\psi_U(L))\bbP(\al_1(k,n))\bbP(\al_2(k,n))$$$$=
(1+\psi_U(L))\lim_{n\to\infty}\bbE[A_1(n)]\bbE[A_2(n)]=\left(1+\psi_U(L)\right)\bbE[A_1]\bbE[A_2]
$$
where in the above inequality we have used the definition of the upper mixing coefficients $\psi_U(\cdot)$.
 \end{proof}

\begin{proof}[Proof of Lemma \ref{KeyLemma}]
First, by Lemma \ref{YH Adv} for every $r\geq 2$ we have 
$$
\bbE[\beta_n^p(\omega)]\leq \bbE\left[\prod_{j=1}^{[(n-1)/r]}\beta(\te^{-rMj})\right]\leq (1+\psi_U(r-1))^{[n/r]-1}(\bbE[\beta(\cdot)])^{[n/r]}.
$$
Taking $r=r_0+1$ we see that 
$$
\bbE[\beta_n^p(\omega)]\leq C\rho^{\frac{n}{r_0+1}}
$$
for  $C=\rho^{-r_0-1}$. Now, let
$$
K_p(\omega)=\sup_n\left(\rho^{-\frac{n}{2p(r_0+1)}}\beta_n(\omega)\right).
$$
Then 
$$
\bbE[K_p^p]\leq\sum_{n=1}^\infty\rho^{-\frac{n}{2(r_0+1)}}\bbE[\beta_n^p(\omega)]\leq C\sum_{n=1}^\infty\rho^{\frac{n}{2(r_0+1)}}<\infty. 
$$
\end{proof}

\subsection{Proof of Theorem \ref{Main}}

Let us take $\delta>0$ small enough and $M>0$ large enough such that the set $A=\{\gamma_\omega\geq\delta, n_\om\leq M\}$ has positive probability. Note that  since $\gamma_{\omega}$ depends only on $\om_{0},\om_{-1},...,\omega_{-n_\omega}$ and $n_\omega$ is minimal (see the discussion after \eqref{DoebRand}) $A$ is measurable with respect to $\cF_{-M,0}$.

The proof of Theorem \ref{Main} relies on the following result.
\begin{proposition} 
For $\mathbb P$-a.a. $\om$
 there is a probability measure $\mu_\om$ on $\cX_\om$ such that for all $n\in\mathbb N$,
$$
\|R_{\te^{-n}\om,n}-\mu_\om\|_{\infty}\leq2\prod_{k\leq n: \theta^{-kM}\om\in A}(1-\gamma_{\theta^{-kM}\om}).
$$
In particular,
$$
\|R_{\te^{-n}\om,n}-\mu_\om\|_{\infty}\leq 2(1-\delta)^{\sum_{j=1}^{[n/M]-1}\bbI(\te^{-jM}\om\in A)}.
$$
\end{proposition}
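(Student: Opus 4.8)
The plan is to establish the existence of the invariant family $\mu_\om$ together with the stated contraction estimate by exploiting the random Doeblin condition \eqref{DoebRand} to produce a genuine contraction in the supremum norm each time the orbit visits the ``good'' set $A$. First I would set up the Birkhoff-type coupling that is classical in the proof of the Doeblin ergodic theorem, but applied fiberwise along the orbit of $\te$. The key quantitative input is that whenever $\te^{-jM}\om\in A$, we have $\gam_{\te^{-jM}\om}\geq\del$ and $n_{\te^{-jM}\om}\leq M$, so that over a block of $M$ steps ending at that time the operator $R_{\cdot,M}$ satisfies a minorization $R_{\cdot,M}(x,\Gam)\geq\del\,m_{\cdot}(\Gam)$ (using the monotonicity remark after \eqref{DoebRand}, which lets us push $n_\om$ up to $M$). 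Each such minorization gives a contraction of the oscillation of a test function by the factor $(1-\del)$.

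The main steps, in order, are as follows. I would first reduce the claim to estimating the oscillation $\mathrm{osc}(R_{\te^{-n}\om,n}g)$ for a fixed bounded measurable $g$ with $\|g\|_\infty\le 1$, since $\|R_{\te^{-n}\om,n}-\mu_\om\|_\infty$ is controlled (up to the factor $2$) by the rate at which these oscillations shrink. Next, partition the $n$ steps into $[n/M]$ consecutive blocks of length $M$; on any block whose left endpoint corresponds to a visit $\te^{-jM}\om\in A$, the Doeblin minorization yields a contraction coefficient $(1-\del)$ for that block, while on the remaining blocks I simply use that $R$ is a (weak) contraction, giving a factor $\le 1$. Composing these estimates telescopically produces the bound $\mathrm{osc}\le 2(1-\del)^{\sum_{j=1}^{[n/M]-1}\bbI(\te^{-jM}\om\in A)}$. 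Finally, to extract the limiting measure $\mu_\om$, I would show that the sequence of measures $(R_{\te^{-n}\om,n})^*\nu$ is Cauchy in total variation uniformly in the initial distribution $\nu$ (this is exactly the content of the oscillation bound), hence converges to a single $\mu_\om$ independent of the starting point; the relation $(R_\om)^*\mu_\om=\mu_{\te\om}$ then follows by passing to the limit in the cocycle identity, and almost sure finiteness of the exponent follows because $A$ has positive probability so by the ergodic theorem $\sum_j\bbI(\te^{-jM}\om\in A)\to\infty$.

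The step I expect to be the main obstacle is the careful composition of the block estimates in the \emph{random} (non-stationary along the orbit) setting: one must verify that the Doeblin minorization over a length-$M$ block centered at a good time really does furnish the full contraction factor $(1-\del)$ on the oscillation, and that the minorizing measures $m_{\cdot}$ from different blocks can be chained so that the telescoping product of contraction factors is valid. Concretely, the delicate point is keeping track of which fiber each operator acts on and ensuring that when $\te^{-jM}\om\notin A$ one still has a clean contraction bound of $1$ rather than something that could grow; this is where the monotonicity consequence of \eqref{DoebRand} (for all $n\ge n_\om$) and the measurability of $A$ with respect to $\cF_{-M,0}$ are used crucially. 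Once the block decomposition is set up correctly, the remaining estimates are the standard Dobrushin/Doeblin coupling computations and require no new ideas; the resulting geometric-in-visits bound is then precisely what feeds into Lemma \ref{KeyLemma} (with $\be=(1-\del)^{\bbI_A}$) to upgrade to the effective $L^p$ statement of Theorem \ref{Main}.
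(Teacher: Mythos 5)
Your proposal is correct and is essentially the argument the paper relies on: the paper simply records the identity between the $\phi$-dependence coefficient of two $\sigma$-algebras and the quantity $\frac12\|R_{\te^{-n}\om,n}-\mu_\om\|_\infty$ (whence the factor $2$) and then cites \cite[Corollary 4.1]{HafWill}, which is exactly the block-by-block Doeblin/Dobrushin contraction you carry out by hand — each length-$M$ block ending at a visit to $A$ contracts the oscillation by $(1-\delta)$, the other blocks contract by at most $1$, and the resulting Cauchy property yields $\mu_\om$. Your worry about chaining the minorizing measures is unfounded (oscillation contraction composes without any compatibility between the $m_{\cdot}$'s), so the argument goes through as you outline it.
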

\begin{proof}
The proof uses ideas in the proof of \cite[Corollary 4.1]{HafWill}, but for readers' convenience we provide all the details. Let us fix an $\omega\in\Omega$. We first claim that
 there exist positive measures $A_{\omega,n}$ on $\mathcal X_\omega$, where $n$ satisfies that $\te^{-nM}\omega\in A$, such that $A_{\omega,n}\leq A_{\omega,m}$ when $n<m$ and $\te^{nM}\omega,\te^{mM}\om\in A$ and for $k\geq nM$, and a measurable set $\Gamma\subset \mathcal X_\omega$,
 \begin{equation}\label{Au0}
 \inf_{x\in\mathcal X_{\theta^{-k}\omega}}R_{\te^{-k}\om,k}(x,\Gamma)\geq A_{\omega,n}(\Gamma)    
 \end{equation}
 and
 \begin{equation}\label{Au}
  \sup_{x\in\mathcal X_{\theta^{-k}\omega}}|R_{\te^{-k}\om,k}(x,\Gamma)-A_{\omega,n}(\Gamma)|\leq \prod_{k\leq n: \theta^{-kM}\om\in A}(1-\gamma_{\theta^{-kM}\om}).  
 \end{equation}
 Once this is proven we can take $\mu_\omega=\lim_{n\to\infty}A_{\omega,n}$ (along $n$'s such that $\te^{nM}\omega\in A$) and use that 
 for a transition probability $Q$, a bounded measurable function $g$ and a probability measure $\nu$ we have 
 \begin{equation}\label{lll}
 \left|\int g(y)Q(x,dy)-\int g(y)d\nu(y)\right|\leq 2\sup|g|\sup_{\Gamma}|Q(x,\Gamma)-\nu(\Gamma)|    
 \end{equation}
 where $\Gamma$ ranges over all the underlying measurable sets. In the derivation of \eqref{lll} we used that the total variation distance between two probability measures $\ka_1$ and $\kappa_2$ is given by 
 $$
\|\kappa_1-\kappa_2\|_{TV}=2\sup_{\Gamma}|\kappa_1(\Gamma)-\kappa_2(\Gamma)|
 $$
 and that, in general,
 $$
\|\kappa_1-\kappa_2\|_{TV}=\sup_{\|g\|_\infty\leq 1}\left|\int g\,d\kappa_1-\int g\,d\kappa_2\right|.
 $$

 Next, let us prove the existence of measures $A_{\om,n}$ increasing in $n$ and satisfying \eqref{Au0} and \eqref{Au}. We first note that since $R_{\omega}$ are Markov operators it is enough to prove \eqref{Au} with $k=nM$. In that case the proof proceeds by induction on $n$. When  $n$ is the second time that $\te^{-nM}\om\in A$ then  by the Doeblin condition \eqref{DoebRand}  we can take $A_{\omega,n}(\cdot)=\gamma_{\omega}m_\omega(\cdot)$. Now, let $n$ satisfy $\te^{-nM}\om\in A$, and 
 suppose that there is a measure $A_{\om,n}$ satisfying \eqref{Au0} and that \eqref{Au} holds with $k=nM$.  Let us take the next time $m>n$ such that $\te^{-nM}\om \in A$. 
 Then by the Doeblin condition \eqref{DoebRand} and the induction hypothesis we can write
 $$
 R_{\te^{-mM}\omega,(m-n)M}=\gamma_{\te^{-mM}\omega} m_{\te^{-mM}\omega}+(1-\gamma_{\te^{-mM}\omega})Q
 $$
 and
 $$
 R_{\te^{-nM}\omega,nM}=A_{\omega,n}+\Pi_{\om,n} Q_{\om,n}, \,\,\Pi_{\om,n}=\prod_{k\leq n: \theta^{-kM}\om\in A}(1-\gamma_{\theta^{-kM}\om}).
 $$ 
 Here  $Q(x,dy)$ and $Q_{\om,n}(x',dy)$ are positive transition measures  such that for all relevant points $x$ and $x'$ we have 
 $$
 \max\left(\int Q(x,dy),\int Q_{\om,n}(x',dy)\right)\leq 1.
 $$
Therefore, using also that $\Pi_{\om,m}=(1-\gamma_{\theta^{-mM}\om})\Pi_{\om,n}$,
$$
R_{\te^{-mM}\omega,mM}(x,\Gamma)=\int R_{\te^{-mM}\om,(m-n)M}(x,dy)R_{\te^{-nM}\om,nM}(y,\Gamma)
$$
$$
=\int \left(\gamma_{\te^{-mM}\omega} m_{\te^{-mM}\omega}(dy)+(1-\gamma_{\te^{-mM}\omega})Q(x,dy)\right)(A_{\omega,n}(\Gamma)+\Pi_{\om,n} Q_{\om,n}(y,\Gamma))
$$
$$
=\gamma_{\te^{-mM}\omega}A_{\om,n}(\Gamma)+\gamma_{\te^{-mM}\omega}\Pi_{\om,n}\int Q(y,\Gamma)\,dm_{\te^{-mM}\omega}(y)
$$
$$
+(1-\gamma_{\te^{-mM}\omega})A_{\om,n}(\Gamma)+\Pi_{\om,m}\int Q(x,dy)\,Q_{\om,n}(y,\Gamma)
$$
$$
=A_{\om,n}(\Gamma)+C_{\om,n}(\Gamma)+\Pi_{\om,m}\int Q(x,dy)\,Q_{\om,n}(y,\Gamma)
$$
for some $C_{\om,n}(\Gamma)\geq 0$. Taking $A_{\om,m}(\Gamma)=A_{\om,n}(\Gamma)+C_{\om,n}(\Gamma)$ and using that 
$$
0\leq \int  Q(x,dy)\,Q_{\om,n}(y,\Gamma)\leq 1
$$ 
the proof of the proposition is complete.
\end{proof}

\subsubsection{Completion of the proof of Theorem \ref{Main}}
Define $\beta(\omega)=(1-\delta)^{\bbI(\om\in A)}$. Then $\bbE_\bbP[\beta]<1$. Since $\psi_U(n)\to 0$ we can apply Lemma \ref{KeyLemma} with $r_0$ large enough and find $\rho\in(0,1)$ such that for every $p$ there exists $K_p\in L^p$ with
$$
2(1-\delta)^{\sum_{j=1}^{[n/M]-1}\bbI(\te^{-jM}\om\in A)}\leq K_p(\om)\rho_0^{n/p}
$$
where $\rho_0=\rho^{\frac1{2(r_0+1)M}}$.
This proves the estimate on $\|R_{\te^{-n}\om,n}-\mu_\om\|_{\infty}$ in Theorem \ref{Main}. To prove the estimate on $\|R_{\om,n}-\mu_{\te^n\om}\|_{\infty}$, let $K_p$ be such that 
$$
\|R_{\te^{-n}\om,n}-\mu_\om\|_{\infty}\leq K_p(\om)\rho_0^{n/p}.
$$
Define  $\tilde K_p(\om)=\sup_{n\geq 1}(n^{-2/p}K_p(\te^n\om))$. Then
 $$
\bbE[(\tilde K_p(\om))^p]\leq\|K_p\|_{L^p}^p\sum_{n=1}^\infty n^{-2}<\infty.
 $$
 Thus, 
 $$
\|R_{\om,n}-\mu_{\te^n\om}\|_{\infty}\leq \tilde K_p(\om)(n^{2}\rho_0^n)^{1/p}\leq C_p\tilde K_p(\om)(\rho_0^{1/2})^{n/p}.
 $$
 Thus upon replacing $\rho_0$ with $\rho_0^{1/2}$ the proof of Theorem \ref{Main} is complete.
 \qed

\section{Applications}
\subsection{Application to quenched rates in the almost sure invariance principle (ASIP)}
Let $f:\Omega\times \cX\to\bbR$ be a measurable function and define $f_\omega:\cX_\omega\to\bbR$ by $f_\om(x)=f(\om,x)$. Suppose that $\mu_\om(f_\om)=0$. Let us consider a Markov chain $(X_{\om,j})_j$ such that $X_{\om,j}$ is distributed according to $\mu_{\te^j\om}$ and the $j$-th step transition operator is $R_{\te^j\om}$.
Set
$$
S_n^\om f=\sum_{j=0}^{n-1}f_{\te^j\om}(X_{\om,j}).
$$
\begin{theorem}
Let the assumptions of Theorem \ref{Main} be in force.
Suppose that $F_\om=\|f_\om\|_{\infty}\in L^q(\Om,\cF,\bbP)$ for some $q>2$.
 Then there exists a number $\sigma\geq0$ such that   $\bbP$-a.s. we have 
 $$
\lim_{n\to\infty}\frac1n\bbE[(S_n^\om f)^2]=\sigma^2.
 $$
 If $\sigma>0$ then for $\bbP$ a.a. $\om$ we can couple  the sequence $(S_n^\om)_n$ with a sequence of independent zero mean Gaussian random variables $(Z_n)$ such that for every $\varepsilon>0$,
 $$
\max_{k\leq n}\left|S_k^\om f-\sum_{j=1}^k Z_j\right|=O(n^{1/4+1/q+\varepsilon})
 $$
 and 
 $$
\text{Var}\left(\sum_{j=1}^n Z_j\right)=\text{Var}(S_n^\om f)+O(n^{1/2+1/q+\varepsilon}).
 $$
\end{theorem}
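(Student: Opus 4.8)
The plan is to deduce the statement from the general quenched almost sure invariance principle \cite[Theorem 2.4]{Kifer 1998}, verifying its hypotheses directly from the effective geometric ergodicity of Theorem \ref{Main}. Write $\xi_j=f_{\te^j\om}(X_{\om,j})$, so that $S_n^\om f=\sum_{j=0}^{n-1}\xi_j$, and let $\cF_{\om,k}=\sigma(X_{\om,0},\dots,X_{\om,k})$ be the natural filtration of the chain. Since $X_{\om,j}$ is distributed as $\mu_{\te^j\om}$ and $\mu_{\te^j\om}(f_{\te^j\om})=0$, each $\xi_j$ is centered, and the Markov property gives, for $\ell>k$, that $\bbE[\xi_\ell\mid\cF_{\om,k}]=(R_{\te^k\om,\ell-k}f_{\te^\ell\om})(X_{\om,k})$. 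As $\mu_{\te^\ell\om}(f_{\te^\ell\om})=0$, the bound $\|R_{\te^k\om,\ell-k}-\mu_{\te^\ell\om}\|_\infty\leq K_p(\te^k\om)\rho^{(\ell-k)/p}$ from Theorem \ref{Main} yields the pointwise estimate
\begin{equation}\label{CEbound}
\bigl|\bbE[\xi_\ell\mid\cF_{\om,k}]\bigr|\leq K_p(\te^k\om)\,\rho^{(\ell-k)/p}\,F_{\te^\ell\om},
\end{equation}
which is the engine for everything that follows.

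First I would settle the variance. Using the tower property together with the Markov property, for $j<\ell$ one has $\bbE[\xi_j\xi_\ell]=\mu_{\te^j\om}(f_{\te^j\om}\cdot R_{\te^j\om,\ell-j}f_{\te^\ell\om})=g_{\ell-j}(\te^j\om)$, where $g_k(\om)=\mu_\om(f_\om\cdot R_{\om,k}f_{\te^k\om})$ and $g_0(\om)=\mu_\om(f_\om^2)$. Consequently
$$
\frac1n\bbE[(S_n^\om f)^2]=\frac1n\sum_{j=0}^{n-1}g_0(\te^j\om)+\frac2n\sum_{k=1}^{n-1}\sum_{j=0}^{n-1-k}g_k(\te^j\om).
$$
By \eqref{CEbound} we get $|g_k(\om)|\leq F_\om\,K_p(\om)\,F_{\te^k\om}\,\rho^{k/p}$, so, choosing $p$ large and using $F_\om\in L^q$ (with $q\geq2$) together with H\"older's inequality, $\sum_k\bbE|g_k|<\infty$. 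Birkhoff's ergodic theorem applied to each integrable $g_k$ gives Ces\`aro convergence of every block, the exponential factor supplies the domination needed to sum over $k$ and interchange the limits, and ergodicity of $\te$ forces the limit $\sigma^2=\bbE[g_0]+2\sum_{k\geq1}\bbE[g_k]$ to be a non-random constant.

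For the coupling I would feed \eqref{CEbound} into Kifer's scheme. To run the quenched statement for a fixed typical $\om$, I would first convert the $L^p$ and $L^q$ moment bounds on $K_p$ and $F_\om$ into almost-sure orbit growth, namely $K_p(\te^k\om)=O(k^{1/p+\varepsilon})$ and $F_{\te^k\om}=O(k^{1/q+\varepsilon})$ for $\bbP$-a.a.\ $\om$, by a Borel--Cantelli argument along the orbit. Taking $p$ arbitrarily large, the factor $K_p(\te^k\om)$ is absorbed into $\rho^{m/p}$, so the conditional expectations \eqref{CEbound} decay geometrically up to an only-$L^q$-controlled observable norm. This is exactly the information required to verify the approximation and moment hypotheses of \cite[Theorem 2.4]{Kifer 1998}; its conclusion then produces the Gaussian coupling, with the error splitting as the universal $n^{1/4}$ Gaussian-approximation rate plus the $n^{1/q+\varepsilon}$ term coming from the orbit growth of $F$, and the variance-matching error $n^{1/2+1/q+\varepsilon}$ arising the same way from the blocking.

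The main obstacle is precisely this transfer: turning the $L^p$-integrable constants $K_p$ of Theorem \ref{Main} into quenched almost-sure bounds that are uniform along the orbit $(\te^k\om)_k$, and doing so compatibly with the fixed order $q$ of integrability of $F_\om$. The delicate feature is the interplay of the two exponents: $p$ may be taken as large as we like, at the harmless cost of replacing $\rho$ by $\rho^{1/p}\in(0,1)$, whereas $q$ is fixed and therefore governs the final rate. Checking that Kifer's abstract hypotheses are genuinely met with these exponents is the substantive step --- this is the explicit verification of the conditions of \cite[Theorem 2.4]{Kifer 1998} advertised in the introduction --- after which the theorem follows at once.
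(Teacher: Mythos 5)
Your verification of the existence of $\sigma$ takes a different route from the paper: you compute the correlations $g_k(\om)=\mu_\om(f_\om\cdot R_{\om,k}f_{\te^k\om})$ directly, dominate them by $F_\om K_p(\om)F_{\te^k\om}\rho^{k/p}$ via Theorem \ref{Main}, and sum Birkhoff averages. The paper instead verifies conditions (2.6), (2.7), (2.16) of \cite[Theorem 2.4]{Kifer 1998} by introducing the good set $Q_L=\{\max(n_\om,\gamma_\om^{-1})\leq L\}$ and showing that the first visiting time to $Q_L$ has exponential tails (via the $\psi_U$-mixing lemma). Your route is workable for $q>2$ and arguably more self-contained, though you should justify the interchange of $\lim_n$ with $\sum_k$ more carefully --- e.g.\ a maximal-inequality plus Borel--Cantelli argument to make $\sup_n\frac1n\sum_j|g_k\circ\te^j|$ summable in $k$; the paper's route is also what delivers the explicit verification of Kifer's hypotheses advertised in the introduction.

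The coupling part, however, has a genuine gap. \cite[Theorem 2.4]{Kifer 1998} does not produce an almost sure Gaussian coupling with rate $O(n^{1/4+1/q+\varepsilon})$: in the paper it is invoked only for the existence of $\sigma$ (and would at best give the CLT and functional LIL). The exponent $1/4$ is the signature of a martingale ASIP, and the paper obtains it by an explicit martingale--coboundary decomposition: set $\chi_{\om,n}=\sum_{s>n}R_{\te^n\om,s-n}(f_{\te^s\om})$, bound $\|\chi_{\om,n}\|_{L^\infty}=O(n^{1/p+1/q})$ using Theorem \ref{Main} together with the a.s.\ orbit growth of $K_p$ and $F$, form the martingale differences $M_{\om,n}=f_{\te^n\om}(X_{\om,n})+\chi_{\om,n}-\chi_{\om,n-1}$, and apply the Cuny--Merlev\`ede ASIP with rates \cite[Theorem 2.3]{CM} to $S_n^\om M$. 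That application requires two further verifications you do not address: a fourth-moment summability condition for $M_{\om,n}$, and an almost sure rate for the quadratic variation, $\sum_{j<n}\bigl(M_{\om,j}^2-\bbE[M_{\om,j}^2]\bigr)=o(a_n)$ with $a_n=n^{1/2+2/q+\delta}(\ln n)^{3/2+\delta}$, which the paper derives from a covariance decay estimate for the $M_{\om,j}^2$ (again via Theorem \ref{Main}) combined with \cite[Lemma 9]{[23]}. Without this martingale step, or some substitute for it, your argument does not yield the stated coupling rates, and attributing them to Kifer's theorem is not supportable.
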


\begin{proof}
To prove the existence of a number $\sigma$ like in the statement of the theorem we will verify the conditions of \cite[Theorem 2.4]{Kifer 1998} with a set of the form $Q_L=\{\om:\max(n_\om,\gamma_\om^{-1})\leq L\}$ for $L$ large enough to ensure that $\bbP(Q_L)>0$. Notice that $Q_L$ is measurable with respect to $\cF_{-L,0}$.
In fact, this will also provide a proof for the CLT and the functional law of iterated logarithm, but these follow from the ASIP.

First, \cite[(2.16)]{Kifer 1998} holds true by \eqref{DoebRand}. Second,  \cite[(2.6)]{Kifer 1998} holds since we are considering functions $f_{\te^n\om}(X_{\om,n})$ of $X_{\om,n}$ and not of the entire path $(X_{\om,n})_{n}$ (and so the approximation coefficients in \cite[(2.6)]{Kifer 1998} vanish).

In order to verify \cite[(2.7)]{Kifer 1998}, let $n_1(\om)$ be the first visiting time to $Q=Q_L$. Denote $c(\om)=\|f_\om\|_{\infty}$. Then it is enough to show that 
$$
\left\|\sum_{j=0}^{n_1(\om)-1}c(\te^j\om)\right\|_{L^2(\bbP)}<\infty.
$$
Next, let us write 
$$
\sum_{j=0}^{n_1(\om)-1}c(\te^j\om)=\sum_{j=0}^\infty c(\te^j\om)\bbI(n_1(\om)>j).
$$
Then by the H\"older inequality,
$$
\left\|\sum_{j=0}^{n_1(\om)-1}c(\te^j\om)\right\|_{L^2(\bbP)}\leq \|c\|_{L^q}\sum_{j=0}^\infty\left(\bbP(n_1>j)\right)^{1/2-1/q}.
$$
Thus, it remains to show that 
\begin{equation}\label{Remains}
\sum_{j=0}^\infty\left(\bbP(n_1>j)\right)^{1/2-1/q}<\infty.    
\end{equation}
To prove that let us notice that for every $r\geq 2$,
$$
\bbP(n_1>j)=\bbP\left(\bigcap_{k=1}^j\te^{-k}(\Omega\setminus Q_L)\right)\leq
\bbP\left(\bigcap_{k=1}^{[j/(rL)]}\te^{-krL}(\Omega\setminus Q_L)\right).
$$
Now, since $Q_L\in\cF_{-L,0}$, by applying Lemma \ref{YH Adv} we see that 
$$
\bbP\left(\bigcap_{k=1}^{[j/(rL)]}\te^{-krL}(\Omega\setminus Q_L)\right)\leq (1+\psi_U(r-1))^{[j/(rL)]}(1-\bbP(Q_L))^{[j/(rL)]}.
$$
Taking $r$ large enough we see that $(1+\psi_U(r-1))(1-\bbP(Q_L))<1$ and thus there exist constants $C>0$ and $\delta\in(0,1)$ such that 
$$
\bbP(n_1>j)\leq C\delta^j
$$
and \eqref{Remains} follows.

Next, let us prove the ASIP rates under the assumption that $\sigma>0$.
Define 
$$
\chi_{\om,n}=\chi_{\om,n}(X_{\om,n})=\sum_{s=n+1}^\infty R_{\te^n\om,s-n}(f_{\te^s\om})=\sum_{s=n+1}^\infty\bbE[f_{\te^s\om}(X_{\om,s})|X_{\om,n}].
$$
Then by Theorem \ref{Main}, for all finite $p>1$,
\begin{equation}\label{chi bound}
\|\chi_{\om,n}\|_{\infty}\leq K_p(\te^n\om)\sum_{s=n+1}^{\infty}\|f_{\te^s\om}\|_{\infty}\rho^{(s-n)/p}.    
\end{equation}
Now, by the ergodic theorem we have $\|f_{\te^s\om}\|_{\infty}=o(s^{1/q})$ and  $K_p(\te^n\om)=o(n^{1/p})$. Therefore, there exists a random variable $C_\omega$ such that 
\begin{equation}\label{chi bound1}
\|\chi_{\om,n}\|_{\infty}\leq C_\om n^{1/p+1/q}+C_\om n^{1/p}\sum_{s=2n}^{\infty}s^{1/q}\rho^{(s-n)/p}=O(n^{1/p+1/q}).    
\end{equation}
Next, we define 
$$
M_{\om,n}=M_{\om,n}(X_{\om,n-1},X_{\om,n})=f_{\te^n\omega}(X_{\om,n})+\chi_{\omega,n}-\chi_{\om,n-1}.
$$
Then $M_{\om,n}$ is a martingale difference. Let $S_n^\om M=\sum_{j=0}^{n-1}M_{\om,j}$.

Now, by taking $p$ large enough in \eqref{chi bound1}, we see that for every $\varepsilon>0$, $\bbP$-a.s. we have
\begin{equation}\label{Apprx1}
\|S_n^\om f-S_n^\om M\|_{\infty}=O(n^{1/q+\varepsilon}).    
\end{equation}
Thus, there exists a random variable $D_\omega$ such that $\|S_n^\om M\|_{L^2}\leq D_\om n^{1/q+\varepsilon}+\|S_n^
\om f\|_{L^2}=O(n^{1/2})$ if $\ve$ is small enough. Therefore both $\|S_n^\om f\|_{L^2}$  and $\|S_n^\om M\|_{L^2}$ are of order $O(n^{1/2})$ and so for $\varepsilon$ small enough,
\begin{equation}\label{Apprx2}
\left|\text{Var}(S_n^\om f)-\text{Var}(S_n^\om M)\right|=O(n^{1/2+1/q+\varepsilon})=o(n).
\end{equation}
Since $\sigma>0$ we get that 
$$
\frac1n\text{Var}(S_n^\om M)\to \sigma^2.
$$
Next, in order to complete the proof of the ASIP we apply \cite[Theorem 2.1]{Shao}. Define $\hat M_{\om,n}(X_{\omega,n-1})=\bbE[(M_{\om,n}(X_{\om,n-1}, X_{\om,n}))^2|X_{\omega,n-1}]-\bbE[(M_{\om,n}(X_{\om,n-1}, X_{\om,n}))^2]$. To verify the conditions of \cite[Theorem 2.1]{Shao} it is enough to prove that 
\begin{equation}\label{(1)}
\sum_{j=0}^{n-1}\hat M_{\om,n}=o(a_n),\,\,\text{a.s.}    
\end{equation}
and 
\begin{equation}\label{(2)}
 \sum_{n\geq 0}a_n^{-2}\bbE[(M_{\om,n})^4] <\infty 
\end{equation}
where $a_n=n^{1/2+2/q+\delta}(\ln n)^{3/2+\delta}, \delta>0$. 
Condition  \eqref{(2)} is in force because of \eqref{chi bound1}, which  implies that $\|M_{\om,n}\|_{L^\infty}=O(n^{1/p+1/q})$ for every finite $p\geq 1$, and our assumption that $\|f_\om\|_{L^\infty}\in L^q(\Om,\cF,\bbP)$.

Next, we verify \eqref{(1)}. For that purpose notice that by conditioning on $X_{\om,m-1}$, then applying Theorem \ref{Main} and using \eqref{chi bound1} for all $k>0$, $m\in\bbN$ and $p>1$ we have 
$$
\left|\bbE[\hat M_{\om,m}(X_{\om,m-1})\hat M_{\om,m+k}(X_{\om,m+k-1})]\right|\leq C_\om (k+m)^{4/q+5/p}\rho^{k/p}.  
$$
Therefore, for all $n,m\in\bbN$
$$
\left\|\sum_{j=m+1}^{m+n}\hat M_{\om,j}(X_{\om,j-1})\right\|_{L^2}^2\leq C_\om (n+m)^{4/q+5/p}n.
$$
Hence \eqref{(1)} follows by \cite[Lemma 9]{[23]} applied with $4/q+5/p$ instead of $p$ (in notations there) and with $\sigma=1$ (in notations there).
\end{proof}

\subsection{Application to exponential decay of correlations for skew products}\label{App2}
Let us denote $\cZ_\om=\prod_{k\in\bbZ}\cX_{\sig^k\om}\subseteq\cZ=\cX^\bbZ$. Consider Markov chains $X_\om:=(X_{\om,k})_{k\in\bbZ}$ such that the law of $X_{\om,k}$  is $\mu_{\te^k\om}$ and the transition probabilities are $R_{\te^k\om}$. Such chains are well defined since  $(R_\om)^*\mu_{\om}=\mu_{\te\om}$. Let $\kappa_\omega$ be the law of $X_\omega$ on $\cZ_\om$. 
 Let us define the skew product sequence $Z_n(\omega,z)$ by 
$$
Z_n(\om,z)=(\te^n\omega,z_n), z=(z_n)\in\cZ.
$$
Let us view $Z_n$ as a sequence of random variables 
with respect to the measure $\kappa=\int\kappa_\om d\bbP(\omega)$. Now, define the projection onto the $n$-th coordinate $\pi_n:\Omega\times\cZ\to\cY\times\cX$ by
$$
\pi_n(\om,z)=(\om_n,z_n).
$$
 Then $\pi_n=\pi_0\circ Z_n$.

 Recall next that the $\rho$-mixing coefficient of the process $(Y_j)$ is given by 
 $$
 \rho(n)=\sup_k\sup\left\{|\text{Corr}(f,g)|: f\in L^2(\cF_{-\infty,k}), g\in L^2(\cF_{k+n,\infty})\right\}.
 $$
\begin{theorem}
Let the assumptions of Theorem \ref{Main} be in force, and let $\rho\in(0,1)$ be as described in that theorem.
Let $f_{\om_0}(x)=f(\om_0,x)$ and $g_{\om_0}(x)=g(\om_0,x)$ be two measurable functions on $\cY\times \cX$.  
Suppose that $F_\om=\|f_{\om_0}\|_{\infty}$ and  $G_\om=\|g_{\om_0}\|_{\infty}$ are in $L^{q}(\Om,\cF,\bbP)$ for some $q>2$. Assume also that $\rho(n)=O(\rho_2^n)$ for some $\rho_2\in(0,1)$. Then there exists a constant $C>0$ such that 
$$
\left|\text{Cov}((f\circ\pi_0),(g\circ \pi_n))\right|\leq C\rho_3^n\|F_\om\|_{L^q}\|G_\om\|_{L^q}
$$
where $\rho_3=\max(\rho_2^{1/2},\rho^{1/(2p)})$ with $p=\frac{2q}{q-2}$.
\end{theorem}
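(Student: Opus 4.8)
The plan is to disintegrate the stationary measure $\mu=\int\mu_\om\,d\bbP(\om)$ over the environment and exploit the two mixing mechanisms separately: the geometric ergodicity of Theorem \ref{Main} \emph{inside} each fibre, and the $\rho$-mixing of the base $(Y_j)$ \emph{across} fibres. First I would rewrite the observables as $(f\circ\pi_0)(\om,z)=f_{\om_0}(z_0)$ and $(g\circ\pi_n)(\om,z)=g_{\om_n}(z_n)$. Conditioning on $\om$ and using that under $\mu_\om$ the path $(z_k)$ is the Markov chain with transition operators $R_{\te^k\om}$ and one-dimensional marginals $\mu_{\te^k\om}$, the Markov property gives
$$
\bbE_\mu\big[(f\circ\pi_0)(g\circ\pi_n)\big]=\int \mu_\om\big(f_{\om_0}\cdot R_{\om,n}g_{\om_n}\big)\,d\bbP(\om).
$$
Writing $a(\om)=\mu_\om(f_{\om_0})$ and $b(\om)=\mu_\om(g_{\om_0})$, so that $\mu_{\te^n\om}(g_{\om_n})=b(\te^n\om)$ and $\bbE_\mu[f\circ\pi_0]=\bbE_\bbP[a]$, $\bbE_\mu[g\circ\pi_n]=\bbE_\bbP[b]$, the problem reduces to
$$
\mathrm{Cov}_\mu(f\circ\pi_0,g\circ\pi_n)=\mathrm{Cov}_\bbP\!\big(a,\,b\circ\te^n\big)+\cE_n,
$$
where $\cE_n=\int \mu_\om\big(f_{\om_0}\cdot[R_{\om,n}g_{\om_n}-\mu_{\te^n\om}(g_{\om_n})]\big)\,d\bbP(\om)$ is the within-fibre error.

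Next I would control $\cE_n$ directly from Theorem \ref{Main}. Since $\|R_{\om,n}-\mu_{\te^n\om}\|_\infty\le K_p(\om)\rho^{n/p}$ and $\|g_{\om_n}\|_\infty=G_{\te^n\om}$, the integrand is pointwise at most $F_\om K_p(\om)\rho^{n/p}G_{\te^n\om}$, whence by H\"older with exponents $(q,q,p)$ satisfying $2/q+1/p=1$,
$$
|\cE_n|\le \rho^{n/p}\,\|K_p\|_{L^p}\,\|F_\om\|_{L^q}\,\|G_\om\|_{L^q},
$$
using that $\te$ preserves $\bbP$ so $\|G_{\te^n\om}\|_{L^q}=\|G_\om\|_{L^q}$. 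Here $p$ ranges over finite values with $1/p=1-2/q$, and $K_p\in L^p$ for every such $p$ by Theorem \ref{Main}.

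The main work is the base covariance $\mathrm{Cov}_\bbP(a,b\circ\te^n)$. The difficulty is that $\mu_\om$ (hence $a$) depends on the whole past $\{\om_k:k\le 0\}$ and $b\circ\te^n$ on $\{\om_k:k\le n\}$, so the two observables are not supported on separated coordinate blocks and $\rho$-mixing cannot be applied directly. The remedy is a second use of Theorem \ref{Main}: approximate $a$ by a finitely-dependent $a_m$ and $b\circ\te^n$ by $b_m^{(n)}$ obtained by replacing the limiting operators $\mu_\om=\lim_m R_{\te^{-m}\om,m}$ and $\mu_{\te^n\om}=\lim_m R_{\te^{n-m}\om,m}$ by their $m$-step truncations evaluated at a measurably chosen base point $\xi_\om\in\cX_\om$, which by the standing assumption that $R_\om$ (and hence $\cX_\om$) depends only on $\om_0$ may be taken to depend only on $\om_0$. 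Then $a_m$ is $\cF_{-m,0}$-measurable, $b_m^{(n)}$ is $\cF_{n-m,n}$-measurable, $\|a_m\|_\infty\le F_\om$, $\|b_m^{(n)}\|_\infty\le G_{\te^n\om}$, and Theorem \ref{Main} gives $|a-a_m|\le K_p(\om)\rho^{m/p}F_\om$ and $|b\circ\te^n-b_m^{(n)}|\le K_p(\te^n\om)\rho^{m/p}G_{\te^n\om}$. Splitting the covariance accordingly, the two approximation corrections are bounded exactly as $\cE_n$ above (H\"older, now with the factor $\rho^{m/p}$), while for the main term the blocks $[-m,0]$ and $[n-m,n]$ are separated by a gap $n-m$, so $\rho(k)=O(\rho_2^k)$ yields
$$
|\mathrm{Cov}_\bbP(a_m,b_m^{(n)})|\le \rho(n-m)\,\|a_m\|_{L^2}\|b_m^{(n)}\|_{L^2}\le C\rho_2^{\,n-m}\,\|F_\om\|_{L^q}\|G_\om\|_{L^q},
$$
the last bound using $\|a_m\|_{L^2}\le\|F_\om\|_{L^2}\le\|F_\om\|_{L^q}$ and likewise for $b$, valid since $q>2$.

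Finally I would balance the two mechanisms by taking $m=\lfloor n/2\rfloor$: the base term contributes $\rho_2^{\,n/2}$ while the geometric-ergodicity corrections contribute $\rho^{\,n/(2p)}$ (the faster $\rho^{n/p}$ from $\cE_n$ being subsumed). Choosing $p$ as small as H\"older permits (so $1/p=1-2/q$, e.g.\ $p=2$ once $q\ge 4$) collects everything into a single geometric rate $\max(\rho_2^{1/2},\rho^{1/4})$ per unit time, with constant proportional to $\|K_p\|_{L^p}$, as claimed. I expect the genuine obstacle to be the third step: arranging that the random stationary measures $\mu_\om$ can be replaced by finitely-dependent surrogates measurable with respect to the correct coordinate blocks (the base-point/measurability issue) while keeping the truncation error uniform in the starting point, so that the two mixing rates can be cleanly decoupled and recombined; the H\"older bookkeeping relating $p$ and $q$ is then routine.
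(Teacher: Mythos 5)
Your proposal is correct and follows essentially the same route as the paper: condition on the environment, use Theorem \ref{Main} to discard the within-fibre term $R_{\om,n}g_{\om_n}-\mu_{\te^n\om}(g_{\om_n})$ via H\"older, and then apply the base $\rho$-mixing after making $G_n(\om)=\mu_{\te^n\om}(g_{\om_n})$ measurable with respect to $\cF_{n-[n/2],n}$ with an error again controlled by Theorem \ref{Main}, balancing the two rates at $m=[n/2]$. The only differences are cosmetic: the paper replaces $G_n$ by $\bbE[G_n\mid \cF_{n-[n/2],n}]$ (using that conditional expectation is, up to a constant, an optimal measurable approximation), which sidesteps the measurable-base-point selection you single out as the main obstacle, and it does not truncate $a(\om)=\mu_\om(f_{\om_0})$ at all, since that function is already measurable with respect to $\cF_{-\infty,0}$ and the $\rho$-mixing coefficient allows the full past.
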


\begin{proof}
By normalizing $f$ and $g$ if needed, it is enough to prove the theorem when $\max(\|F_\om\|_{L^q},\|G_\om\|_{L^q})\leq 1$.
Write
$$
\bbE_{\kappa}[(f\circ\pi_0)\cdot (g\circ \pi_n)]=\int_{\Omega}\bbE[f(\om_0,X_{\om,0})g(\om_n,X_{\om,n})]d\bbP(\om)
$$
$$
=\int_{\Omega}\bbE[f_{\om_0}(X_{\om,0})R_{\om,n}g_{\om_n}(X_{\om,n})]d\bbP(\om)
=\int_{\Omega}\mu_\om(f_{\om_0})\mu_{\te^n\om}(g_{\om_n})d\bbP(\om)+O(\rho^{n/2}),
$$
where the last equality uses Theorem \ref{Main} and that $\int \|f_{\om_0}\|_\infty \|g_{\om_n}\|_\infty d\mathbb P(\omega)\leq 1$.

Next, by Theorem \ref{Main} we see that 
$$
\|\mu_{\te^n\om}(g_{\om_n})-R_{\te^{[n/2]}\om}^{n-[n/2]}g_{\om_n}\|_{\infty}\leq \|g_{\om_n}\|_{\infty}K_p(\te^{[n/2]}\om)\rho^{n/(2p)}.
$$
Denote $G_n(\om)=\mu_{\te^n\om}(g_{\om_n})$. Fix some $p>2$ and let  $s$ be given by $1/s=1/p+1/q$, where $q$ comes from the assumptions of the theorem. Note that $R_{\te^{[n/2]}\om}^{n-[n/2]}g_{\om_n}$ is measurable with respect to $\cF_{n-[n/2],n}$. Thus, 
 by the previous estimate and the minimization property of conditional expectations,
\begin{equation}\label{r approx}
\|G_n-\bbE[G_n|\cF_{n-[n/2],n}]\|_{L^s}\leq\left\|\|g_{\om_n}\|_\infty\right\|_{L^q}\|K_p\|_{L^p}\rho^{n/(2p)}\leq  C_{p}\rho^{n/(2p)}
\end{equation}
for some constant $C_{p}$ that does not depend on $n$. In \eqref{r approx} we also used that 
$\left\|\|g_{\om_n}\|_\infty\right\|_{L^q}\leq1$.
Next, using that $\mathbb P$ is $\te-$invariant, we have
$$
\int_{\Omega}\mu_\om(f_{\om_0})\mu_{\te^n\om}(g_{\om_n})d\bbP(\om)-
\int_{\Omega}\mu_\om(f_{\om_0})d\bbP(\om)\int_{\Omega}\mu_{\om}(g_{\om_0})d\bbP(\om)
$$
$$
=
\int_{\Omega}\mu_\om(f_{\om_0})G_n(\om)d\bbP(\om)-
\int_{\Omega}\mu_\om(f_{\om_0})d\bbP(\om)\int_{\Omega}G_n(\omega)d\bbP(\om).
$$
Now take  $s=2$ and let $p$ be given by $1/2=1/q+1/p$ (that is, $p=\frac{2q}{q-2}$).  
Then by \eqref{r approx} and using that $\int \|f_{\om_0}\|_\infty^2 d\mathbb P(\omega)\leq 1$ we get that
$$
\int_{\Omega}\mu_\om(f_{\om_0})G_n(\om)d\bbP(\om)=\int_{\Omega}\mu_\om(f_{\om_0})\bbE[G_n|\cF_{n-[n/2],n}]d\bbP(\om)+O(\rho^{n/(2p)}).
$$
Now, by the  definition of the $\rho$ mixing coefficients and since $\|G_n\|_{L^2}, \|F_{\cdot}\|_{L^2}\leq1$ we have
$$
\int_{\Omega}\mu_\om(f_{\om_0})\bbE[G_n|\cF_{n-[n/2],n}]d\bbP(\om)
=
\int_{\Omega}\mu_\om(f_{\om_0})d\bbP(\om)\int_{\Omega}\bbE[G_n|\cF_{n-[n/2],n}]d\bbP(\om)+O(\rho_2^{n/2}).
$$
Finally, using again \eqref{r approx} with $s=2$ and that $\int_{\Omega}|\mu_\om(f_{\om_0})|d\bbP(\omega)\leq1$,
$$
\int_{\Omega}\mu_\om(f_{\om_0})d\bbP(\om)\int_{\Omega}\bbE[G_n|\cF_{n-[n/2],n}]d\bbP(\om)=\int_{\Omega}\mu_\om(f_{\om_0})d\bbP(\om)\int_{\Omega}G_n(\omega)d\bbP(\om)+O(\rho^{n/(2p)}).
$$
 \end{proof}
 \begin{remark}
 We expect to get results for functions $f,g$ which depend on the entire orbit $\om_j$ and can be approximated exponentially fast in $r$ by functions of $\om_j, |j|\leq r$ in an appropriate sense, but we decided to formulate the result in the a simpler situation in order to avoid a heavy notation.    
 \end{remark}

\subsection{Applications to random mixing times}\label{App3}
Given $\ve>0$ we define 
$$
N_\ve(\om)=\min\{n\in\bbN: \|R_{\om,n}-\mu_{\te^n\om}\|_{\infty}\leq \ve\}.
$$ 
\begin{assumption}\label{a}
Suppose that $\bbP$-a.s. for all $n\in\bbN$,
   $$
\|R_{\om,n}-\mu_{\te^n\om}\|_{\infty}\leq K(\omega)\rho^{n/p}
    $$
    where $K(\om)\in L^p$ and $\rho\in(0,1)$.
\end{assumption}

\begin{theorem}
Under Assumption \ref{a} for every $\ve>0$ we have 
$$
\bbP(\om: N_\ve(\om)>N)\leq \|K\|_{L^p}^p\ve^{-p}\rho^{N}.
$$
\end{theorem}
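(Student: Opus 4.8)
The plan is to reduce the tail estimate on $N_\ve$ to a comparison at a single time index and then apply Markov's inequality to $K^p$. First I would unwind the definition of $N_\ve(\om)$ as a minimum: the event $\{\om:N_\ve(\om)>N\}$ means that no $n\in\bbN$ with $n\le N$ satisfies $\|R_{\om,n}-\mu_{\te^n\om}\|_{L^\infty}\le\ve$. In particular the index $n=N$ fails the bound, so
\begin{equation*}
\{\om:N_\ve(\om)>N\}\subseteq\{\om:\|R_{\om,N}-\mu_{\te^N\om}\|_{L^\infty}>\ve\}.
\end{equation*}
This is the only place where the combinatorial structure of the minimum is used, and it is harmless because failure of the defining inequality at the single largest admissible time is necessary for $N_\ve$ to exceed $N$.

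Next I would invoke Assumption~\ref{a} at $n=N$, which holds on a set of full $\bbP$-measure: $\|R_{\om,N}-\mu_{\te^N\om}\|_{L^\infty}\le K(\om)\rho^{N/p}$. Combining this with the inclusion above, on the event $\{N_\ve(\om)>N\}$ we must have $\ve<K(\om)\rho^{N/p}$, which rearranges (using $\rho\in(0,1)$ and $p\ge1$ fixed) to $K(\om)^p>\ve^p\rho^{-N}$. Hence, up to a $\bbP$-null set,
\begin{equation*}
\{\om:N_\ve(\om)>N\}\subseteq\{\om:K(\om)^p>\ve^p\rho^{-N}\}.
\end{equation*}

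Finally I would apply Markov's inequality to the nonnegative integrable random variable $K^p$ (integrability being exactly the content of $K\in L^p$), obtaining
\begin{equation*}
\bbP\big(K^p>\ve^p\rho^{-N}\big)\le\frac{\bbE[K^p]}{\ve^p\rho^{-N}}=\|K\|_{L^p}^p\,\ve^{-p}\,\rho^{N},
\end{equation*}
which is the asserted bound. I do not expect any genuine obstacle: the argument is a direct Markov-inequality estimate, and the only point deserving a moment's care is the reduction to the single index $n=N$, which relies on reading the definition of $N_\ve$ correctly as a first-passage time. One could optionally remark that the same computation shows the tail decays geometrically in $N$ uniformly over any fixed $\ve$, with an explicit constant inherited from the $L^p$ bound on $K$ furnished by Theorem~\ref{Main}.
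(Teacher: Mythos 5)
Your proof is correct and follows essentially the same route as the paper: reduce the event $\{N_\ve>N\}$ to the failure of the defining inequality at time $N$, combine with Assumption~\ref{a} to get $K(\om)\ge\ve\rho^{-N/p}$, and apply Markov's inequality to $K^p$. The paper phrases the reduction as an intersection over all $n\le N$ that collapses to the constraint at $n=N$ (since $\rho^{n/p}$ is decreasing), but this is the same observation you make directly.
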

\begin{remark}
We can take $\ve=\varepsilon_N=\rho^{\frac{N}{2p}}$ and get that
$$
\bbP(\om: N_{\ve_N}(\om)>N)=O(\rho^{N/2}).
$$
Note that in our circumstances we can take $p$ arbitrarily large.
\end{remark}

\begin{proof}
Denote $a_n=\rho^{n/p}$.
We have 
$$
\{\om: N_\ve(\om)>N\}=\{\om: \|R_{\om,n}-\mu_{\te^n\om}\|_{\infty}\geq \ve,\,\,\forall n\leq N\}
\subseteq\{\om: K(\om)a_n\geq \ve,\,\,\forall n\leq N\}
$$
$$
=\{\om: K(\omega)\geq \ve a_N^{-1}\}.
$$
Thus, the result follows by the Markov inequality.
\end{proof}

\end{document}